\journal{arXiv}
\newtheorem{theorem}{Theorem}[section]
\newtheorem{lemma}[theorem]{Lemma}
\theoremstyle{definition}
\newtheorem{definition}[theorem]{Definition}
\newtheorem{prop}{Proposition}[section]
\theoremstyle{remark}
\numberwithin{equation}{section}
\newcommand{\neweq}[1]{\begin{equation}\label{#1}}
\def\phi{\varphi}
\def\incep{\left\{\begin{array}{cl} }
 \def\termin{\end{array}\right. }
\def\2af{2^*_\alpha}
\begin{document}

\begin{frontmatter}

\title{The Shannon-McMillan-Breiman theorem of random dynamical systems for amenable group actions\tnoteref{t1}}
\tnotetext[t1]{This document is the results of the research project funded by Fundamental Research Program of Shanxi Province (No.20210302123322), the Science and Technology Research Program of Chongqing Municipal Education
Commission (Grant No.KJQN202400826) and Research Project of Chongqing Technology and Business University(No. 2256015).}

\author{Yuan Lian}
\address{College of Mathematics and Statistics, Taiyuan Normal University, Taiyuan 030619, China}
\ead{andrea@tynu.edu.cn}

\author{Bin Zhu}
\address{School \ of \ Mathematics \ and \ Statistics\ , Chongqing Technology and Business University, Chongqing, 400067, China}
\ead{binzhucqu@163.com}




\begin{abstract}

The Shannon-McMillan-Breiman theorem is one of the most important results in information theory, which can describe the random ergodic process, and its proof uses the famous Birkhoff ergodic theorem, so it can be seen that it plays a crucial role in ergodic theory. In this paper, the Shannon-McMillan-Breiman theorem in the random dynamical systems is proved from the perspective of an amenable group action, which provides a boost for the development of entropy theory in the random dynamical systems for amenable group actions.

\end{abstract}



\begin{keyword}
Shannon-McMillan-Breiman theorem, amenable group, random dynamical system

2010 Mathematics Subject Classification: 37A35,37B40.
\end{keyword}


\end{frontmatter}

\section{Introduction}

Entropy is one of the fundamental invariants widely used to describe the complexity of dynamical systems, and metric theory entropy measures the average amount of information and complexity of a system. As we all know, the measure entropy of a system describes the information creation rate of evolution, while the topological entropy measures the total exponential complexity of the orbital structure.

In the study of deterministic dynamical systems, Katok \cite{K} introduces the equivalence definition of measure entropy in a method similar to the definition of topological entropy.

A fundamental pillar of information theory is the Shannon-McMillan-Breiman (SMB) theorem. This theorem was first proved by\ Claude and\ Shannon in the famous paper \ \cite{CS} in 1948, and later extended to the discrete stochastic process \ \cite{MB} by \ McMillan in 1953, and the almost-sure convergence was proved by \ Breiman \ \cite{BL} in 1957 and the case of countable partitions was proved by \ Chung \cite{C}.

Amenable groups include finite groups and compact groups, and the action of these groups on the compact metric space is a natural expansion of integer group action. Bogensch\"{u}tz \ \cite{B} introduces the Shannon-McMillan-Breiman theorem in random dynamical systems. Lindenstrauss \ \cite{L}, Ornstein and Weiss \ \cite{OW} proved the Shannon-McMillan-Breiman theorem in the random dynamical system for an amenable group action under certain conditions, while in \ \cite{W} Weiss proved the non-ergodic case in an system for amenable group actions (see also \ \cite{ZC}). For the theories for amenable group actions, please refer to \ \cite{KL}. In this paper, we will consider extending the Shannon-McMillan-Breiman theorem to the random dynamical systems for an amenable group action.

\section{Group actions}

Let $X$ be a compact metric space, $d$ is a measure on $X$, $m$ is a Borel probability measure, and $f:X\rightarrow X$ is a continuous mapping that guarantees Borel probability measure $m$ invariant. $(\Omega, \mathcal{F},\mathbb{P},G)$ is a measurable dynamical system (abbreviated as MDS), with the identity element of $G$ denoted by $e_{G}$. Please refer to \cite{DZ} for the following sections. In 1983, Brin and Katok solved the problem posed by Young and Ledrappier, thus giving a topological version of the Shannon-McMillan-Breiman (SMB) theorem, i.e. the local entropy formula.

\begin{definition}
Suppose $(\Omega,\mathcal{F},\mathbb{P},G)$ is MDS, where $(\Omega,\mathcal{F},\mathbb{P})$ is a  Lebesgue space. In particular, $(\Omega,\mathcal{F},\mathbb{P})$ is complete countable separated.

Now suppose $(X,\mathcal{B})$ is a measurable space and $\mathcal{E}\in\mathcal{F}\times\mathcal{B}$, then $(\mathcal{E}, (\mathcal{F}\times\mathcal{B})_{\mathcal{E}})$ naturally forms a measurable space, where $(\mathcal{F} \times\mathcal{B})_{\mathcal{E}}$ is a $\mathcal{E}$ algebra, $\mathcal{F}\times\mathcal{B}$ limit on $\mathcal{E}$ constitutes that.

For any \ $\omega\in\Omega$, let \ $\mathcal{E}_{\omega}=\{x\in X:(\omega,x)\in \mathcal{E}\}$ be the \ $\omega$ part of \ $\mathcal{E}$.

A random dynamical system (or simply RDS) defined on \ $(X,\mathcal{B})$ related to \ $(\Omega,\mathcal{F},\mathbb{P},G)$ is a family of mappings
$$\mathbf{F}=\{F_{g,\omega}:\mathcal{E}_{\omega}\rightarrow \mathcal{E}_{g\omega}\mid g\in G, \omega\in \Omega\}$$
that satisfies the following conditions:

(1) For each \ $\omega\in\Omega$, the transformation \ $F_{e_{G},\omega}$ is a constant mapping on \ $\mathcal{E}_{\omega}$ ;

(2) For any \ $g\in G$, the mapping
$(\mathcal{E},(\mathcal{F}\times\mathcal{B})_{\mathcal{E}})\rightarrow (X,\mathcal{B})$
defined by
\ $(\omega,x)\mapsto F_{g,\omega}(x)$
is measurable;

(3) for arbitrary \ $\omega\in\Omega$ and \ $g_{1},g_{2}\in G, F_{g_{2},g_{1}\omega}\circ F_{g_{1},\omega}=F_{g_{2}g_{1},\omega}$ (so for any \ $g\in G$, there is \ $F_{g^{-1},\omega}=(F_{g,g^{-1}\omega})^{-1}$).

\end{definition}

In this case, for any \ $g\in G$  to define a mapping \ $\Theta:G\times \mathcal{E}\longrightarrow \mathcal{E}$ by way of \ $(\omega,x)\rightarrow(g\omega,F_{g,\omega}x)$, then \ $G$  naturally forms a measurable action on \ $\mathcal{E}$, which is called the corresponding skew product transformation.

\begin{definition}
Let \ $\mathbf{F}=\{F_{g,\omega}:\mathcal{E}_{\omega}\rightarrow \mathcal{E}_{g\omega}\mid g\in G, \omega\in \Omega\}$  be an RDS defined on \ $(X,\mathcal{B})$ related to \ $(\Omega,\mathcal{F},\mathbb{P},G)$, where \ $X$ is a compact space with a metric \ $d$ and the Borel $\sigma$ algebra is \ $\mathcal{B}$. If for \ $\mathbb{P}-$ a.e. $\omega\in\Omega$, $\emptyset\neq\mathcal{E}_{\omega}\subseteq X$  is a compact subset, and for any \ $g\in G$, $F_{g,\omega}$ is a continuous mapping (so \ $\mathbb{P}-$ a.e. $\omega\in\Omega$ and \ $g\in G$,  $F_{g,\omega}:\mathcal{E}_{\omega}\rightarrow \mathcal{E}_{g\omega}$ are homomorphic maps), then \ $\mathbf{F}$ is said to be defined as a continuous random dynamical system (abbreviated as CRDS) with respect to $(\Omega,\mathcal{F},\mathbb{P},G)$ on \ $(X,\mathcal{B})$.
\end{definition}

In this paper, we use\ $P(X)$ denotes the family of sets consisting of all finite measurable partitions of \ $X$, and $\mathcal{P}_{\mathbb{P}}(\Omega\times X)$ denotes the set of all probability measures $\mu$  on \ $\Omega\times X$ satisfying \ $\mu\circ\pi^{-1}=\mathbb{P}$, where \ $\pi_{\Omega}:\Omega\times X\rightarrow \Omega$ is the natural projection. Remember
$$\mathcal{P}_{\mathbb{P}}(\mathcal{E})=\{\mu\in\mathcal{P}_{\mathbb{P}}(\Omega\times X):\mu(\mathcal{E})=1\}.$$

$\mu\in\mathcal{P}_{\mathbb{P}}(\mathcal{E})$ can be decomposed into \ $d\mu(\omega,x)=d\mu_{\omega}(x)d\mathbb{P}(\omega)$, where \ $\mu_{\omega}\ (\ \omega\in \Omega$ ) is a regular conditional probability measure related to \ $\sigma$ algebra \ $\mathcal{F}_{\mathcal{E}}$. Since \ $X$ is a compact metric space, then \ $\mu$ has a decomposition \cite{DRM}, so that for any measurable set $\mathcal{R}\subseteq \mathcal{E}$, there is
$\mu(\mathcal{R})=\int_{\Omega}\mu_{\omega}\mathcal{R}_{\omega}d\mathbb{P}(\omega)$.

Let \ $\mathcal{P}_{\mathbb{P}}(\mathcal{E},G)$ denote the set of all \ $G$-invariant elements in \ $\mathcal{P}_{\mathbb{P}}(\mathcal{E})$. $\mathcal{E}_{\mathbb{P}}(\mathcal{E},G)$ denotes the set of all ergodic elements in \ $\mathcal{P}_{\mathbb{P}}(\mathcal{E})$. Let \ $C_{\mathcal{E}}$ denote the set of all finite covers of \ $\mathcal{E}$, and $P(\mathcal{E})$ denote the set of all finite partitions of \ $\mathcal{E}$. $\mathcal{F}(G)$ denotes the non-empty finite subsets in \ $G$. Let \ $\mathcal{U}\in C_{\mathcal{E}}$, $F\in \mathcal{F}(G)$, and use \ $\mathcal{U}^{F}$ to denote \ $\bigvee_{g\in F}g^{-1}\mathcal{U}$.

\begin{definition}

Let \ $\mathcal{U}\in C_{\mathcal{E}}$, $\mu\in \mathcal{P}_{\mathbb{P}}(\mathcal{E},G)$, see\ \cite{DZ}, \ $\mu$ fibre entropy of $\mathbf{F}$ with respect to \ $\mathcal{U}$ is
\begin{align*}
h^{(r)}_{\mu}(\mathbf{F},\mathcal{U})=\lim_{n\rightarrow\infty}\frac{1}{|F_{n}|}H_{\mu}(\mathcal{U}_{F_{n}}|\mathcal{F}_{\mathcal{E}})
\end{align*}
and \ $\mu$ fibre entropy of \ $\mathbf{F}$ is
\begin{align*}
h^{(r)}_{\mu}(\mathbf{F})=\sup_{\mathcal{U}\in P(\mathcal{E})}h^{(r)}_{\mu}(\mathbf{F},\mathcal{U}),
\end{align*}
where
$$H_{\mu}(\mathcal{U}|\mathcal{F}_{\mathcal{E}})=\int_{\Omega}H_{\mu_{\omega}}(\mathcal{U}_{\omega})d\mathbb{P}(\omega),
\ \mathcal{U}_{\omega}=\{U_{\omega}:U\in\mathcal{U} \}$$

$$(\mathcal{U}_{F})_{\omega}=\bigvee_{g\in F}(g^{-1}\mathcal{U})_{\omega}=\bigvee_{g\in F}(F_{g,\omega})^{-1}\mathcal{U}_{g\omega}=\bigvee_{g\in F}F_{g^{-1},g\omega}\mathcal{U}_{g\omega}.$$

\end{definition}

Let \ $\pi:(Y_{1},G)\rightarrow (Y_{2},G)$  be a factor mapping between two dynamical systems, then there is
$$h_{\nu_{1}}(G,Y_{1}|\pi)=h_{\nu_{1}}(G,Y_{1}|\pi^{-1}\mathcal{B}_{Y_{2}}).$$

\section{The main result}

Next, we will first give the metric
$$d^{\omega}_{E}(x,y)=\max_{s\in E}d(F_{s,\omega}x,F_{s,\omega}y) \quad \forall x,y\in X, \forall E\in \mathcal{F}(G), \forall \omega\in \Omega.$$
which is defined on $X$.

Let \ $B_{d^{\omega}_{E}}(x,y)$ represent a $d^{\omega}_{E}$ ball with centre \ $x$  and radius $\varepsilon$.

To prove the main results of this article, we first consider a finite measurable partition \ $\xi$ of \ $\Omega\times X$. The following propositions can be regarded as a generalization of Proposition 2.2 in \ \cite{ZY}, while the Proposition 2.2 in \ \cite{ZY} only considers the case of classical random dynamical systems; this article show the corresponding results to the random dynamical systems for amenable group actions.

Let\ $g\in G$ and \ $E\in \mathcal{F}(G)$. For arbitrary finite partition \ $\xi$ of \ $\Omega\times X$ and arbitrary \ $\omega\in\Omega$, let \ $\xi_{\omega}=\{A_{\omega}:A\in \xi\}$. \ $\xi^{E}_{\omega}(x)$ denotes the element of \ $\bigvee_{g\in E}F_{g,\omega}^{-1}\xi_{g\omega}$ containing \ $x$  (specifically, \ $\xi_{\omega}(x)=\xi^{\{e\}}_{\omega}(x)$).

The Proposition \ \ref{prop1} in this paper requires the following lemma \ \ref{lem11} and lemma \ \ref{lem12}, lemma \ \ref{lem11} and lemma \ \ref{lem12} from the lemma 2.1 and lemma 2.3 in \ \cite{L}.

\begin{lemma}\label{lem11}
Let \ $\delta>0$, $\epsilon$  be small and  $M$ is a large number related to \ $\delta$. Let \ $\overline{F}_{1},\overline{F}_{2},...,\overline{F}_{M}\subseteq G$ be a consequence of finite subsets satisfying
$$
\mid\bigcup_{j\leq i}\overline{F}^{-1}_{j}\overline{F}_{i+1}\mid<(1+\epsilon)\mid\overline{F}_{i+1}\mid,
$$
suppose $F$  be another subset of  $G$  (usually larger than  $\overline{F}_{M}$), and for \ $i=1,2,...,M,$ let  \ $A_{i}\subseteq F$  satisfy \ $F_{i}A_{i}\subseteq F$. Then there if \ $B(i,a)$ that is either  \ $\overline{F}_{i}a$ or \ $\varnothing$ for $i=1,2,...,M,a\in A_{i}$ such that
$$
(1+\delta)\mid\bigcup_{i,a}B(i,a)\mid\geq\sum_{i,a}\mid B(i,a)\mid\geq\min_{i}|A_{i}|-\delta|F|.
$$
\end{lemma}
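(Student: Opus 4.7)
My approach is a greedy packing argument that processes the indices $i$ in the order $M,M-1,\ldots,1$ (largest scale first). I initialise the running union $U\subseteq F$ to be empty and, for each such $i$ and each $a\in A_i$, I set $B(i,a):=\overline{F}_i a$ whenever $|\overline{F}_i a\setminus U|\geq \tfrac{1}{1+\delta}|\overline{F}_i a|$ and $B(i,a):=\varnothing$ otherwise, updating $U$ after each selection. This is the standard Ornstein--Weiss / Lindenstrauss selection rule, and it uses that $\overline{F}_i A_i\subseteq F$ to keep everything inside $F$.

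The left inequality is immediate from this rule. Every selected $B(i,a)$ contributes at least $\tfrac{1}{1+\delta}|B(i,a)|$ new points to the running union, so summing over all selections gives $|\bigcup_{i,a}B(i,a)|\geq \tfrac{1}{1+\delta}\sum_{i,a}|B(i,a)|$, which rearranges to $(1+\delta)|\bigcup B(i,a)|\geq\sum|B(i,a)|$.

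The right inequality is the main obstacle. Fix $i_0$ with $|A_{i_0}|=\min_i|A_i|$, and let $U_i$ denote the running union just before level $i$ is processed and $S_i\subseteq A_i$ the set of selected points at level $i$. Each skipped $a\in A_i\setminus S_i$ satisfies $|\overline{F}_i a\cap U_i|>\tfrac{\delta}{1+\delta}|\overline{F}_i|$ and therefore lies in $\overline{F}_i^{-1}U_i$. The nested F{\o}lner hypothesis enters twice: first, since $U_i$ is a union of translates $\overline{F}_j a'$ with $j>i$ and $|\overline{F}_i^{-1}\overline{F}_j|\leq(1+\epsilon)|\overline{F}_j|$ for $j>i$, the enlargement $\overline{F}_i^{-1}U_i\setminus U_i$ has size at most $\epsilon$ times the total selected mass at levels $>i$; second, combined with the Markov-style bookkeeping $\sum_{a\in A_i}|\overline{F}_i a\cap U_i|\leq|\overline{F}_i||U_i|$, it gives a scale-by-scale control of $|A_i\setminus S_i|$. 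Telescoping these bounds across $i=M,M-1,\ldots,i_0$ shows that the cumulative skipped mass remains at most $\delta|F|$, whence $\sum_{i,a}|B(i,a)|\geq|\overline{F}_{i_0}||S_{i_0}|\geq|A_{i_0}|-\delta|F|=\min_i|A_i|-\delta|F|$.

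The delicate part is calibrating $\delta$, $\epsilon$ and $M$ against one another so that the geometric accumulation of F{\o}lner slack through the $M$ scales does not exhaust the $\delta|F|$ error budget; this is exactly where the hypotheses "$M$ large'' and "$\epsilon$ small'' relative to $\delta$ are consumed.
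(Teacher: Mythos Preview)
The paper does not actually prove this lemma: immediately before stating it, the authors say that Lemmas~\ref{lem11} and~\ref{lem12} are taken from Lemma~2.1 and Lemma~2.3 of Lindenstrauss~\cite{L}, and no argument is given in the paper itself. So there is no in-paper proof to compare against; the relevant benchmark is Lindenstrauss's original argument.

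Your proposal is precisely that argument. The greedy top-down selection rule (accept $\overline{F}_i a$ iff at least a $\tfrac{1}{1+\delta}$-fraction of it is new) is the Ornstein--Weiss/Lindenstrauss covering mechanism, and your derivation of the left inequality is the standard one-line computation. Your outline of the right inequality also follows Lindenstrauss: rejected centres at level $i$ must lie in $\overline{F}_i^{-1}U_i$, the nested invariance hypothesis controls $|\overline{F}_i^{-1}U_i|$ in terms of $|U_i|$, and iterating across the $M$ scales with $\epsilon$ small and $M$ large keeps the accumulated loss below $\delta|F|$. The one place your write-up is thin is the final line $|\overline{F}_{i_0}||S_{i_0}|\geq |A_{i_0}|-\delta|F|$: as written this conflates a bound on the number of rejected centres with a bound on rejected mass, and in Lindenstrauss's proof the telescoping is organised slightly differently (one shows directly that the final union $U$ is large unless every $A_i$ is essentially exhausted). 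This is a presentational gap rather than a wrong idea, but if you flesh the argument out you should track the quantity $|U_i|$ through the levels rather than $|S_{i_0}|$ alone.
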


\begin{lemma}\label{lem12}

For any \ $\delta, C>0$  and finite subset \ $K\subseteq G$, if \ $M$ is large enough and $\epsilon$  is small enough (depending on the preceding arguments), the following holds. Suppose given a finite subset $\overline{F}_{i,j}\subseteq G$($1=1,2,...,M,j=1,2,...,N_{i}$) such that
$$
\mid\bigcup_{(i',k')\preceq(i,k)}\overline{F}^{-1}_{i',k'}\overline{F}_{i,k+1}\mid\leq C\mid\overline{F}_{i,k+1}\mid, k=1,2,...,N_{i}-1,
$$
$$
\mid\bigcup_{(i',k')\preceq(i,N_{i})}\overline{F}^{-1}_{i',k'}\overline{F}_{i+1,k}\mid\leq (1+\epsilon)\mid\overline{F}_{i+1,k}\mid, k=1,2,...,N_{i+1},
$$
If $A_{i,j}\subseteq F$($1=1,2,...,M,j=1,2,...,N_{i}$)  such that $\overline{F}_{i,j}A_{i,j}\subset F$ and take $\alpha$ so that for all $i$
$$
|\bigcup_{j=1}^{N_{i}}KA_{i,j}|\geq \alpha|F|.
$$
Then it is possible to find set-valued random variables $B(i,j,a)$ (for $1=1,2,...,M,j=1,2,...,N_{i}$) and $a\in A_{i,j}$ such that

1.$B(i,j,a)$ is either \ $\overline{F}_{i}a$ or \ $\varnothing$;

2.Let \ $\Lambda:F\rightarrow \mathbb{N}$ be a random function \ $\Lambda(g)=\sum_{i,j,a}1_{B(i,j,a)}(g)$, then for any \ $g\in F$,
$$
E(\Lambda(g)\mid\Lambda(g)>0)<(1+\delta);
$$

3.$E(\sum_{g\in G}\Lambda(g))=E(\sum_{i,j,a}\mid B(i,j,a)\mid)>(\alpha-\delta)\mid F \mid$.
\end{lemma}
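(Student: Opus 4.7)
The plan is to reduce Lemma~\ref{lem12} to Lemma~\ref{lem11} through a two-level construction: the outer index $i$ is already governed by the $(1+\epsilon)$ near-invariance hypothesis, which is precisely what Lemma~\ref{lem11} needs, while the inner index $j$ is only controlled by the weaker overlap constant $C$ and so must be diluted probabilistically.

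First I would fix the parameters in reverse order. Given $\delta$, $C$, and $K$, choose auxiliary thresholds $\delta_0 \ll \delta/M$ and invoke Lemma~\ref{lem11} at level $\delta_0$ to determine how large $M$ and how small $\epsilon$ must be. I would then construct the sets $B(i,j,a)$ as follows. For each outer level $i$, apply Lemma~\ref{lem11} (or a minor variant) to the inner sequence $\overline{F}_{i,1}, \ldots, \overline{F}_{i,N_i}$, whose consecutive-union overlap is only $C$, to produce candidate tiles of the form $\overline{F}_{i,j} a$ with $a \in A_{i,j}$. Then I would thin these candidates independently, keeping each with probability $p_i$ chosen so that the expected pointwise multiplicity from level $i$ alone is at most $1 + \delta/(2M)$, while the expected retained coverage from level $i$ is close to $\alpha|F|/M$. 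Finally, I would aggregate across levels $i = 1, \ldots, M$ and use the outer tempering condition $|\bigcup_{(i',k')\preceq(i,N_i)}\overline{F}^{-1}_{i',k'}\overline{F}_{i+1,k}| \leq (1+\epsilon)|\overline{F}_{i+1,k}|$ to ensure cross-level overlap is at most $O(\epsilon M) < \delta/2$.

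Conclusion 1 then holds by construction. Conclusion 3 follows by summing the expected per-level coverages and applying the density hypothesis $|\bigcup_j K A_{i,j}| \geq \alpha|F|$: at each level the full union of candidate tile centers has measure at least $\alpha|F|$, and after independent random thinning a controlled fraction survives, with the $K$-fattening in the density hypothesis absorbing boundary losses that arise in passing from a single center to a full tile. Conclusion 2 follows from independence of the thinning across $(i,j,a)$ together with the outer tempering bound, which guarantees that the random function $\Lambda(g)$ has conditional expectation close to one given $\Lambda(g) > 0$.

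\emph{Main obstacle.} I expect the principal difficulty to lie in the conditional multiplicity bound $E(\Lambda(g) \mid \Lambda(g) > 0) < 1 + \delta$ rather than in either the unconditional expectation or the coverage. This bound demands that points covered at all are rarely covered more than once, and it is here that the two-level structure becomes delicate: within level $i$ the overlap can only be reduced by random thinning, while across levels it can only be reduced through the $(1+\epsilon)$ tempering hypothesis. Balancing these two mechanisms --- so that the dilution probabilities $p_i$ are large enough to recover total coverage $(\alpha - \delta)|F|$ yet small enough to keep the conditional multiplicity near one --- will be the crux of the argument, and is precisely the reason $M$ must be taken large and $\epsilon$ small \emph{depending on the preceding arguments} $C$ and $\alpha$, as stated in the hypothesis of the lemma.
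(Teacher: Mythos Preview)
The paper does not supply its own proof of this lemma; it states explicitly that Lemma~\ref{lem11} and Lemma~\ref{lem12} are Lemmas~2.1 and~2.3 of Lindenstrauss~\cite{L} and then uses them as black boxes in the proof of Proposition~\ref{prop1}. So there is nothing in the paper itself to compare your sketch against.

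That said, your plan is broadly consonant with Lindenstrauss's actual argument: he too exploits the two-tier index structure, handling the outer index $i$ through the $(1+\epsilon)$-tempering hypothesis and the inner index $j$ through a random selection procedure, and the crux is precisely the conditional multiplicity bound in item~2. One caution: in~\cite{L} the randomization is not simple independent Bernoulli thinning of a fixed candidate family but a sequential random selection of centers (uniformly within appropriate windows, with collisions discarded), and it is this structure that makes $E(\Lambda(g)\mid \Lambda(g)>0)<1+\delta$ provable. You correctly flag this bound as the main obstacle, but pure independent thinning with a fixed probability $p_i$ does not obviously deliver it: conditioning on $\Lambda(g)>0$ biases toward heavily covered points, and when the inner overlap is only bounded by $C$ this bias need not wash out. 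To close the gap you would likely need to replace the independent-thinning step by something closer to Lindenstrauss's sequential construction.
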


\begin{prop}\label{prop1}
Let \ $(X,d)$ be a compact metric space, $G$ be an infinite countable discrete amenable group, and $\{F_{n}\}$ be a tempered F{\o}lner sequence of \ $G$, which satisfies \ $e_{G}\in F_{1}\subseteq F_{2}\subseteq F_{3}\subseteq\cdot\cdot\cdot, $ and for any $n\in\mathbb{N}, |F_{n}|>n$.

If \ $\mathbf{F}$ is a CRDS defined on \ $(X,\mathcal{B})$ related to \ $(\Omega,\mathcal{F},\mathbb{P},G)$, $\mu\in \mathcal{P}_{\mathbb{P}}(\mathcal{E},G)$, then for any finite measurable partition \ $\xi$ of $\Omega\times X$, the following can be obtained

(1)In \ $L^{1}(\Omega\times X,\mu)$, $\lim_{n\rightarrow\infty}\frac{1}{|F_{n}|}I_{\mu_{\omega}}(\bigvee_{g\in F_{n}}F_{g,\omega}^{-1}\xi_{g\omega})(x)=E_{\mu}(r|\mathcal{I})(\omega,x),\mu-a.e.$, where
$$I_{\mu_{\omega}}(\bigvee_{g\in F_{n}}F_{g,\omega}^{-1}\xi_{g\omega})(x)=-\log\mu_{\omega}(\xi^{F_{n}}_{\omega}(x)), n\in\mathbb{N}.$$
is an information function, $r(\omega,x)=\lim_{n\rightarrow\infty}I_{\mu_{\omega}}(\xi_{\omega}|\bigvee_{g\in F_{n}}F_{g,\omega}^{-1}\xi_{g\omega})(x)$, $\mathcal{I}$  is an \ $\sigma$ algebra made up of an invariant subset of \ $G$;

(2)$h^{(r)}_{\mu}(\mathbf{F},\xi)=\lim_{n\rightarrow\infty}\int H_{\mu_{\omega}}(\xi_{\omega}|\bigvee_{g\in F_{n}}F_{g,\omega}^{-1}\xi_{g\omega})dP(\omega)$;

(3)If\ $\mu$ is ergodic, then\ $\lim_{n\rightarrow\infty}\frac{1}{|F_{n}|}I_{\mu_{\omega}}(\bigvee_{g\in F_{n}}F_{g,\omega}^{-1}\xi_{g\omega})(x)=h_{\mu}^{(r)}(\mathbf{F},\xi),\mu-a.e.(\omega,x).$

\end{prop}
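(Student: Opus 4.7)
The strategy is to transport the amenable-group Shannon-McMillan-Breiman argument (Lindenstrauss, Ornstein-Weiss) to the skew product $\Theta$ on $(\mathcal E,\mu)$, carrying the $\omega$-dependence of the partition through every step. I first observe that the sequence
\[
r_{n}(\omega,x)=I_{\mu_{\omega}}\Bigl(\xi_{\omega}\,\Big|\,\bigvee_{g\in F_{n}}F_{g,\omega}^{-1}\xi_{g\omega}\Bigr)(x)
\]
is a reverse submartingale (decreasing in $n$, modulo the nesting of the refining $\sigma$-algebras along the Følner sequence) and is dominated in $L^{1}(\mu)$ by the Chung-type bound on the Doob maximal function of an information sequence; hence $r_{n}\to r$ both $\mu$-a.e.\ and in $L^{1}$. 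The Lindenstrauss pointwise ergodic theorem for tempered Følner sequences, applied to $r$ and to the skew product, then gives
\[
\frac{1}{|F_{n}|}\sum_{g\in F_{n}}r\bigl(g\omega,F_{g,\omega}x\bigr)\longrightarrow E_{\mu}(r|\mathcal I)(\omega,x)
\]
both $\mu$-a.e.\ and in $L^{1}(\mu)$. This is the quantity that part (1) asserts the normalized fibre information converges to, so the content of (1) is that the telescoping sum of $r_{n}$'s really approximates $-\log\mu_{\omega}(\xi^{F_{n}}_{\omega}(x))$.

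I would prove part (2) next, as a warm-up. The sequence $n\mapsto\int H_{\mu_{\omega}}(\xi_{\omega}\,|\,\bigvee_{g\in F_{n}}F_{g,\omega}^{-1}\xi_{g\omega})\,d\mathbb P(\omega)$ is monotone non-increasing and bounded below by $0$; a fibred chain-rule telescoping combined with the Følner property of $\{F_{n}\}$ (to control the boundary error) identifies its limit with $\lim_{n}\frac{1}{|F_{n}|}H_{\mu}(\xi^{F_{n}}\,|\,\mathcal F_{\mathcal E})=h^{(r)}_{\mu}(\mathbf F,\xi)$, which is exactly (2).

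The main task is the $L^{1}$ convergence in (1), for which I would establish matching upper and lower bounds after fixing $\delta>0$ and restricting to an $\omega$-measurable set $\Omega_{\delta}$ of $\mathbb P$-measure at least $1-\delta$ on which $r_{k}\to r$ uniformly. For the upper bound I invoke Lemma \ref{lem11}: for large $M$ and small $\epsilon$, I choose Følner blocks $\overline F_{1},\ldots,\overline F_{M}$ from $\{F_{k}\}$ and tile all but a $\delta$-fraction of $F_{n}$ by translates $\overline F_{i}a$. Summing fibred conditional informations along this tiling yields
\[
-\log\mu_{\omega}\bigl(\xi^{F_{n}}_{\omega}(x)\bigr)\le\sum_{i,a}I_{\mu_{a\omega}}\bigl(\xi^{\overline F_{i}}_{a\omega}\bigr)+O(\delta)|F_{n}|\log|\xi|,
\]
and the ergodic theorem above drives the right-hand side to $E_{\mu}(r|\mathcal I)+O(\delta)$. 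The matching lower bound uses Lemma \ref{lem12}: the random quasi-tiling produces, with $\mu$-probability at least $1-\delta$, a nearly disjoint cover of $F_{n}$ by translates $\overline F_{i,j}a$ on which the fibre information density $\frac{1}{|\overline F_{i,j}|}I_{\mu_{a\omega}}(\xi^{\overline F_{i,j}}_{a\omega})$ is within $\delta$ of $E_{\mu}(r|\mathcal I)$, while condition (2) of Lemma \ref{lem12} prevents double counting. The main obstacle is exactly this combinatorial step: because the fibre partitions $\xi_{\omega}$ depend on $\omega$, the choice of block $\overline F_{i,j}$ realising the approximation of $r$ cannot be made uniformly in $\omega$. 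I would overcome this by the Lusin-type restriction above combined with the tempered maximal inequality of Lindenstrauss, transported to the skew product, which upgrades the resulting $L^{1}$ bound to $\mu$-a.e.\ convergence. Letting $\delta\to 0$ finishes (1), and part (3) is then immediate: when $\mu$ is $G$-ergodic, $\mathcal I$ is $\mu$-trivial, so $E_{\mu}(r|\mathcal I)$ equals the constant $\int r\,d\mu=h^{(r)}_{\mu}(\mathbf F,\xi)$ by (2), and the a.e.\ convergence from (1) gives the statement.
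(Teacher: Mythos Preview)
Your proposal is correct and draws on the same toolkit as the paper---martingale convergence of the conditional information, Chung's maximal inequality, the Lindenstrauss pointwise ergodic theorem for tempered F{\o}lner sequences, and the two tiling Lemmas~\ref{lem11} and~\ref{lem12}---but you deploy these tools in a different order. The paper proves (1) by a pure chain-rule telescope: enumerating $F_{n}=\{g_{1},\ldots,g_{i_{n}}\}$ it writes
\[
I_{\mu_{\omega}}\Bigl(\bigvee_{g\in F_{n}}F_{g,\omega}^{-1}\xi_{g\omega}\Bigr)(x)=\sum_{m=1}^{i_{n}}r_{i_{n}-m}\circ\Theta_{m}(\omega,x),
\]
uses martingale convergence and Chung's lemma to get $r_{n}\to r\in L^{1}$, and then applies the pointwise ergodic theorem (Theorem~4.28 of \cite{KL}) directly to $r$, without invoking Lemmas~\ref{lem11} or~\ref{lem12} at all. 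Part (2) is then obtained simply by integrating (1). The tiling lemmas appear only in the paper's separate proof of (3), which is an Ornstein--Weiss style argument: Lemma~\ref{lem11} plus a counting bound and Borel--Cantelli for the $\limsup$, and Lemma~\ref{lem12} plus a covering-number contradiction for the $\liminf$. You instead prove (2) first as a standalone monotonicity computation, put all the quasi-tiling work into (1), and then read off (3) immediately from (1) and (2) via triviality of $\mathcal I$ in the ergodic case. Your organization is logically tidier---(3) really is a corollary once (1) and (2) are in hand---at the cost of a heavier proof of (1); the paper's route is lighter for (1) but rests on identifying the enumeration-dependent telescope $\sum_{m}r_{i_{n}-m}\circ\Theta_{m}$ with the ergodic average $\sum_{g\in F_{n}}r\circ\Theta_{g}$, a step the paper does not spell out and which your tiling argument effectively replaces.
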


\begin{proof}

Set \ $g\in G$, $\mu\in \mathcal{P}_{\mathbb{P}}(\mathcal{E},G)$. From the action of \ $G$ on \ $\Omega$  and \ \cite{B}, it can be seen that for \ $\mathbb{P}-a.e.\ \omega\in\Omega$, there is
\begin{align}\label{w2}
F_{g,\omega}\mu_{\omega}=\mu_{g\omega},
\end{align}
for any \ $n\in \mathbb{N}$, denote \ $i_{n}=|F_{n}|$. Let \ $F_{n}=\{g_{1},g_{2},\cdot\cdot\cdot,g_{i_{n}}\}$, suppose $$r_{i_{n}-j}(\omega,x)=I_{\mu_{\omega}}\left(\xi_{\omega}|\bigvee^{i_{n}}_{m=j+1}F_{g_{j},g_{j}^{-1}\omega}\circ F_{g_{m}g_{j}^{-1}\omega}\xi_{g_{m}g_{j}^{-1}\omega}\right)(x).$$

By reusing the equation\ (2.1) and the properties of the information function
$$I_{\mu_{\omega}}\left(\alpha\bigvee\beta\right)=I_{\mu_{\omega}}(\alpha)+I_{\mu_{\omega}}(\beta|\alpha)$$
where \ $\alpha, \beta$ is any two finite partitions of \ $X$, then there is

\begin{equation*}
\begin{split}
\   I_{\mu_{\omega}}(\bigvee_{g\in F_{n}}F_{g,\omega}^{-1}\xi_{g\omega})(x) &=I_{\mu_{\omega}}\left(F_{g_{1},\omega}^{-1}\xi_{g_{1}\omega}\bigvee(\bigvee_{g\in F_{n}\backslash \{g_{1}\}}F_{g,\omega}^{-1}\xi_{g\omega})\right)(x)\\
&=I_{\mu_{\omega}}\left(F_{g_{1},\omega}^{-1}(\xi_{g_{1}\omega}\bigvee(\bigvee_{g\in F_{n}\backslash \{g_{1}\}}F_{g_{1},\omega}F_{g,\omega}^{-1}\xi_{g\omega}))\right)(x)\\
&=I_{\mu_{g_{1}\omega}}\left(\xi_{g_{1}\omega}\bigvee(\bigvee_{g\in F_{n}\backslash \{g_{1}\}}F_{g_{1},\omega}F_{g,\omega}^{-1}\xi_{g\omega})\right)(F_{g_{1},\omega}x)\\
&=I_{\mu_{g_{1}\omega}}\left(\xi_{g_{1}\omega}\big|\bigvee_{g\in F_{n}\backslash \{g_{1}\}}F_{g_{1},\omega}F_{g,\omega}^{-1}\xi_{g\omega}\right)(F_{g_{1},\omega}x)\\
&\ +I_{\mu_{g_{1}\omega}}(\bigvee_{g\in F_{n}\backslash \{g_{1}\}}F_{g_{1},\omega}F_{g,\omega}^{-1}\xi_{g\omega})(F_{g_{1},\omega}x)\\
&=r_{i_{n}-1}\circ\Theta_{1}(\omega,x)+I_{\mu_{\omega}}\left(\bigvee_{g\in F_{n}\backslash \{g_{1}\}}F_{g_{1},\omega}^{-1}\xi_{g\omega}\right)(x)\\
&=r_{i_{n}-1}\circ\Theta_{1}(\omega,x)+I_{\mu_{\omega}}\left(F_{g_{2},\omega}^{-1}\xi_{g_{2}\omega}\bigvee(\bigvee_{g\in F_{n}\backslash \{g_{1},g_{2}\}}F_{g,\omega}^{-1}\xi_{g\omega})\right)(x)\\
&=r_{i_{n}-1}\circ\Theta_{1}(\omega,x)+I_{\mu_{g_{2}\omega}}\left(\xi_{g_{2}\omega}\bigvee(\bigvee_{g\in F_{n}\backslash \{g_{1},g_{2}\}}F_{g_{2},\omega}F_{g_{1},\omega}^{-1}\xi_{g\omega})\right)(F_{g_{2},\omega}x)\\
&=r_{i_{n}-1}\circ\Theta_{1}(\omega,x)+I_{\mu_{g_{2}\omega}}\left(\xi_{g_{2}\omega}|\bigvee_{g\in F_{n}\backslash \{g_{1},g_{2}\}}F_{g_{2},\omega}F_{g_{1},\omega}^{-1}\xi_{g\omega}\right)(F_{g_{2},\omega}x)\\
&+I_{\mu_{\omega}}\left(\bigvee_{g\in F_{n}\backslash \{g_{1},g_{2}\}}F_{g,\omega}^{-1}\xi_{g\omega}\right)(x)\\
&=r_{i_{n}-1}\circ\Theta_{1}(\omega,x)+r_{i_{n}-2}\circ\Theta_{2}(\omega,x)+\cdot\cdot\cdot+r_{0}\circ\Theta_{i_{n}}(\omega,x)\\
&=\sum^{m=i_{n}}_{m=1}r_{i_{n}-m}\circ\Theta_{m}(\omega,x),
\end{split}
\end{equation*}
where\ $\Theta_{i_{j}}(\omega,x)=\Theta(g_{i_{j}},(\omega,x)), \forall j=1,2,...i_{n}, \ r_{0}\circ\Theta_{i_{n}}(\omega,x)=I_{\mu_{\omega}}(\xi_{\omega})(x)$. Then
$$\lim_{n\rightarrow\infty}r_{i_{n}}(\omega,x)=\lim_{n\rightarrow\infty}I_{\mu_{\omega}}(\xi_{\omega}|\bigvee^{i_{n}}_{m=1}F^{-1}_{g_{m},\omega}\xi_{g\omega})(x)=r(\omega,x),\mu-a.e.(\omega,x).$$
can be inferred from the martingale convergence theorem. In addition, we can get   \ $\int\sup_{i_{n}}r_{i_{n}}(\omega,x)d\mu(\omega,x)\leq\int H_{\mu_{\omega}}(\xi_{\omega})d\mathbb{P}(\omega)+1<\infty$ from\ Chung lemma\ \cite{P}, then \ $r\in L^{1}(\Omega\times X,\mu)$ is obtained by the control convergence theorem, thus, from the Theorem 4.28 in\ \cite{KL}, we can see that
$$\lim_{n\rightarrow\infty}\frac{1}{|F_{n}|}\sum_{g\in F_{n}}r\circ \Theta_{g}(\omega,x)=E_{\mu}(r|\mathcal{I})(\omega,x),\mu-a.e. \text{in} \ L^{1}(\Omega\times X,\mu)$$
that is
$$\lim_{n\rightarrow\infty}\frac{1}{|F_{n}|}I_{\mu_{\omega}}\left(\bigvee^{i_{n}}_{m=1}F^{-1}_{g,\omega}\xi_{g\omega}\right)(x)=E_{\mu}(r|I)(\omega,x),\mu-a.e.(\omega,x),$$
Thus\ (1) is proven to be true.

Secondly, we start to prove (2), since
\begin{equation*}
\begin{split}
h^{(r)}_{\mu}(\mathbf{F},\xi)&=\lim_{n\rightarrow\infty}\frac{1}{|F_{n}|}\int H_{\mu_{\omega}}(\bigvee_{g\in F_{n}}F_{g,\omega}^{-1}\xi_{g\omega})d\mathbb{P}(\omega)\\
&=\lim_{n\rightarrow\infty}\frac{1}{|F_{n}|}\int I_{\mu_{\omega}}(\bigvee_{g\in F_{n}}F_{g,\omega}^{-1}\xi_{g\omega})(x)d\mu\\
&=\int E_{\mu}(r|\mathcal{I})(\omega,x)d\mu\\
&=\int r(\omega,x)d\mu\\
&=\lim_{n\rightarrow\infty}\int H_{\mu_{\omega}}(\xi_{\omega}|\bigvee_{g\in F_{n}}F_{g,\omega}^{-1}\xi_{g\omega})d\mathbb{P}(\omega).
\end{split}
\end{equation*}

Finally, proving (3), a sequence \ $C(n)\subseteq \xi^{F_{n}}$ satisfying
$$|C(n)|\leq2^{(h^{(r)}_{\mu}(\mathbf{F},\xi)+\eta)|F_{n}|},$$
and for \ $\mu-a.e. (\omega,x)$, and sufficiently large \ $n$, there are \ $\xi^{F_{n}}_{\omega}(x)\in C(n)$ and
$$
|(C(n))_{\omega}|\leq|C(n)| \text{and}\ \xi_{\omega}^{F_{n}}(x)\in (C(n))_{\omega}, \ \mu-a.e. (\omega,x),
$$
can be found using Lemma \ \ref{lem11} and the Pointwise Ergodic Theorem.

Let
$$
M(n)=\left\{(\omega,x)\in \cup (C(n)):\frac{-\log\mu_{\omega}(\xi_{\omega}^{F_{n}}(x))}{|F_{n}|}>h_{\mu}^{(r)}(\mathbf{F},\xi)+2\eta\right\}.
$$

Since
$$\mu(M(n))\leq |C(n)|\times 2^{(h^{(r)}_{\mu}(\mathbf{F},\xi)+\eta)|F_{n}|}\leq 2^{-\eta|F_{n}|},$$
gets
$$\Sigma^{\infty}_{n=1}\mu(M(n))<\infty.$$
Using the Borel-Cantelli Lemma, for a sufficiently large\ $n$, $a.e.(\omega,x)$ is not in \ $M(n)$, and then
\begin{align}\label{91}
\limsup_{n\rightarrow\infty}\frac{-\log\mu_{\omega}(\xi^{F_{n}}_{\omega}(x))}{|F_{n}|}\leq h^{(r)}_{\mu}(\mathbf{F},\xi)+2\eta,\ \mu-a.e. (\omega,x).
\end{align}
is obtained.

On the other hand, using the lemma\ \ref{lem12}, if a set of regular measures for $(\omega,x)$ satisfies
\begin{align}\label{92}
\liminf_{n\rightarrow\infty}\frac{-\log\mu_{\omega} (\xi^{F_{n}}_{\omega}(x))}{|F_{n}|}< h^{(r)}_{\mu}(\mathbf{F},\xi)-2\eta,
\end{align}
then for any  $\delta>0$, if \ $n$ is large enough, then there is a set \ $\mathcal{E}'$ of measures \ $1-\delta$ such that for any \ $(\omega,x)$ there is \ $M(x)=\{F_{n_{i}}a\}_{i\in \mathcal{I}}$ satisfying the following conditions

1) $n_{i}\geq\delta^{-1}$ and for any\ $i\in \mathcal{I}, F_{n_{i}}a_{i}\subset F_{n}$;

2) $\mu(\xi^{F_{n}}(a_{i}(\omega,x)))>2^{(h^{(r)}_{\mu}(\mathbf{F},\xi)-2\eta)|F_{n_{i}}|}$;

3) $(1+\delta)|\cup F_{n_{i}}a_{i}|\geq \sum_{i\in \mathcal{I}}|F_{n_{i}}|\geq(1-\delta)|F_{n}|$.

As long as \ $\delta$ is sufficiently small, then the number of elements covering \ $\mathcal{E}'$ in \ $\xi^{F_{n}}$  does not exceed \ $2^{(h^{(r)}_{\mu}(\mathbf{F},\xi)-\eta)|F_{n}|}$, so that the following relation can be defined by \ $h^{(r)}_{\mu}(\mathbf{F},\xi)$

\begin{equation*}
\begin{split}
h^{(r)}_{\mu}(\mathbf{F},\xi)&=\lim_{n\rightarrow\infty}\frac{1}{|F_{n}|}H_{\mu}(\xi_{F_{n}}|\mathcal{F}_{\mathcal{E}})\\
&\leq\lim_{n\rightarrow\infty}\frac{1}{|F_{n}|}\log|\xi^{F_{n}}|\\
&\leq\lim_{n\rightarrow\infty}\frac{1}{|F_{n}|}\log2^{(h^{(r)}_{\mu}(\mathbf{F},\xi)-\eta)|F_{n}|}\\
&<h^{(r)}_{\mu}(\mathbf{F},\xi)-\eta.
\end{split}
\end{equation*}
Contradictions, therefore
\begin{align}\label{93}
\liminf_{n\rightarrow\infty}\frac{-\log\mu_{\omega}(\xi_{\omega}^{F_{n}}(x))}{|F_{n}|}\geq h^{(r)}_{\mu}(\mathbf{F},\xi)-2\eta.
\end{align}
Thus \ (3) is established.
\end{proof}

{\bf Author contributions} All authors contributed to the study conception and design. The first draft of the manuscript was written by Yuan Lian, both the second author and the first author analysed and verified the manuscript. and all authors commented on previous versions of the manuscript. All authors read and approved the final manuscript.

{\bf Data Availability Statement} The authors declare that this research does not use data from any external sources or counterparts.

{\bf Declarations}

{\bf Competing Interest}  The authors have no relevant financial or non-financial interests to disclose.

\bigskip \noindent{\bf References}



\end{document}